\documentclass[11pt,reqno]{amsart}
\usepackage{amscd,amsmath,amsopn,amssymb,amsthm,multicol}
\usepackage{tikz,subdepth,anysize}
\usetikzlibrary{decorations.markings,arrows.meta,bending,calc}
\tikzset{>={Stealth[scale=1.5, bend]}}
\everymath=\expandafter{\the\everymath\displaystyle}
\usetikzlibrary{positioning}

\usetikzlibrary{arrows,decorations.markings,positioning}
\usetikzlibrary{matrix,decorations.pathmorphing}
\tikzset{inner sep=0pt, node distance=5mm,
  root/.style={circle,draw,minimum size=5pt,thick},
  broot/.style={circle,draw,minimum size=5pt,thick,fill},
  xroot/.style={circle,draw,minimum size=5pt,thick,label=below:$\times$},
  doublearrow/.style={postaction={decorate},   decoration={markings,mark=at position .6 with {\arrow[line width=1.2pt]{>}}},double distance=1.6pt,thick},
  rdoublearrow/.style={postaction={decorate},   decoration={markings,mark=at position .4 with {\arrowreversed[line width=1.2pt]{>}}},double distance=1.6pt,thick},
	rtriplearrow/.style={postaction={decorate},   decoration={markings,mark=at position .4 with {\arrowreversed[line width=1.2pt]{>}}},double distance=2.5pt,thick},
	ltriplearrow/.style={postaction={decorate},   decoration={markings,mark=at position .6 with {\arrow[line width=1.2pt]{>}}},double distance=2.5pt,thick},
  curvedline/.style={bend=right}
}

\textwidth 160mm
\textheight 240mm

\setlength{\topmargin}{-0.5cm}
\setlength{\oddsidemargin}{0cm}
\setlength{\evensidemargin}{0cm}

\newcommand\com[1]{}

\newcommand\D{{\mathcal D}}
\newcommand\E{\mathcal{E}}
\newcommand\g{{\frak g}}

\newcommand\m{{\frak m}}
\newcommand\fp{{\frak p}}
\newcommand\M{\hat{M}}
\newcommand\op[1]{\mathop{\rm #1}\nolimits}
\newcommand\p{\partial}
\renewcommand\P{{\mathbb P}}
\newcommand\R{{\mathbb R}}

\theoremstyle{plain}
\newtheorem{theorem}{Theorem}

\newtheorem{rk}{Remark}

\theoremstyle{definition}

\begin{document}


\title[Differential Invariants of Curves in $G_2/P$]%
{Differential invariants of curves\\ in ${\bold G_2}$ flag varieties}

\author{Boris Kruglikov}
\address{Department of Mathematics and Statistics, UiT the Arctic University of Norway, Troms\o\ 9037, Norway.
\ E-mail: {\tt boris.kruglikov@uit.no}. }

\author{Andreu Llabr\'{e}s}
\address{Department of Mathematics and Statistics, UiT the Arctic University of Norway, Troms\o\ 9037, Norway.
\ E-mail: {\tt andreu.llabres@uit.no}. }

 \begin{abstract}
We compute the algebra of differential invariants of unparametrized
curves in the homogeneous $G_2$ flag varieties, namely in $G_2/P$.
This gives a solution to the equivalence problem for such curves.
We consider the cases
of integral and generic curves and relate the equivalence problems
for all three choices of the parabolic subgroup $P$.
 \end{abstract}

\maketitle

\section*{Introduction}

Computation of differential invariants of (unparametrized) curves
was an important topic in XIX-th century mathematics.
Besides Frenet-Serret formulas for Euclidean spaces, curves
were extensively studied in projective spaces.
In particular, Klein and Lie derived special ODEs describing projective invariant classes of curves in the plane \cite{KlL} and Halphen
computed the invariants that govern projective equivalence \cite{H},
see \cite{KoL} for a modern approach and review.
Wilczynski \cite{W} derived a
complete set of invariants for such curves in any dimension,
starting from the Laguerre-Forsyth normal form for linear ODEs.

Invariants of curves were derived in various non-metric geometries,
for instance in conformal, Grassmannian and others \cite{B,F,G}.
Fundamental invariants for some classes of curves in generalized flag varieties
were computed by Doubrov and Zelenko \cite{DZ}. This approach was based on
the moving frame method and the theory of parabolic geometries.
In particular, it covered general curves in projective spaces
and integral curves in the $G=G_2$ flag varieties for split algebraic
$G_2=G_2^*$, namely homogeneous space $G/P$ with some parabolic subgroup $P$.

The goal of this paper is to revisit the case of curves in $M=G_2/P$
based on the theory of differential invariants for which we refer
the reader to \cite{KL}
(we assume the curves are regular, so that its velocity never vanishes).
The algebra of such (absolute rational) invariants depends on the type of 1-jet of a curve, and we compute the Hilbert function counting
the number of differential invariants for every type.
Then we concentrate on the two poles, corresponding to the minimal
and the maximal orbits of the action of $G$ on $J^1(M,1)=\mathbb{P}TM$.
For integral curves, representing minimal orbits, we obtain
the invariants differently and more explicitly than in \cite{DZ}.
For generic curves, representing maximal orbits, our results are
apparently new.

We give more details to the case $M=G_2/P_1$ (labelling of the
parabolic subgroups corresponds to the Bourbaki numeration),
where our tool is the canonical conformal structure associated to
the (2,3,5) distribution by Nurowski. In the flat case that we discuss,
it corresponds to another homogeneous representation
$M=SO(3,4)/P_1$ 
(here $P_1$ is the subgroup of $B_3$ naturally extending the previous $P_1$)
induced by the inclusion $G_2\subset SO(3,4)$.
We also discuss differential invariants of curves in other
$G_2$ flag varieties with a different choice of the parabolic, namely  $G_2/P_{12}$ and $G_2/P_2$, but since the formulae are large we
do not provide full details there.

It turns out that there is a transformation (multiple-valued to one side)
between generic curves in $G_2/P_1$ and $G_2/P_2$ based on the twistor
correspondence via $G_2/P_{12}$ and the geometry of those spaces.
For minimal integral curves (no restriction for $G_2/P_1$ but
corresponding to triple root Petrov type in the case $G_2/P_2$)
such a transformation is known as the prolongation,
but it does not exist for all curves. The transformation we propose
for generic curves (it is $1:1$ to one side and $1:2$ to the other)
allows to reduce computation of invariants to those of $G_2/P_1$.

On historical side we note that two realisations of $G_2$ corresponding
to homogeneous spaces $G_2/P_1$ and $G_2/P_2$ were obtained in 1893 by
E.~Cartan \cite{C} and F.~Engel \cite{E}. An interplay between these
models is a base for the above mentioned correspondence.

We also note that the method developed in this paper is applicable to
obtain differential invariants of curves in curved geometries of type $(G_2,P)$, as we will briefly discuss in the conclusion,
though we do not provide any explicit formulae for those.
 It would be interesting to extend the results
 to parabolic geometries of general type $(G,P)$.

\medskip

Maple computations supporting the results of this paper, and
containing some large formulae, can be found in a supplement
to the arXiv submission.

\bigskip

\textsc{Acknowledgment.}
The research leading to these results has received funding from the Norwegian Financial Mechanism 2014-2021 (project registration number 2019/34/H/ST1/00636), the Polish National Science Centre (NCN) (grant number 2018/29/B/ST1/02583), and the Troms\o{} Research Foundation (project ``Pure Mathematics in Norway'').

\section{Invariants of curves in $M^5=G_2/P_1$}\label{S1}

Associated with $P_1$ is the gradation of $\g=\op{Lie}(G_2)$ of depth 3
 \begin{equation}\label{g2p1}
\g=\g_{-3}\oplus\g_{-2}\oplus\g_{-1}\oplus\g_0\oplus\g_1\oplus\g_2\oplus\g_3
 \end{equation}
with $\dim\g_{\pm1}=\dim\g_{\pm3}=2$, $\dim\g_{\pm2}=1$ and
$\g_0=\mathfrak{gl}_2$. The filtration $\g^i=\oplus_{j\ge i}\g_j$
is invariant with respect to $\fp=\g^0$ and its Lie group
$P_1= GL_2\ltimes\exp(\fp_+)$, which is equal to $\op{Stab}_o(G_2)$,
$o=[P_1]$, for the $G_2$ action on the 5-dimensional homogeneous space $M=G_2/P_1$.

We identify $T_oM$, as well as tangent spaces at other points of $M$,
with $\m=\g/\fp$ and it will be convenient to interpret it as
$\g_-=\g_{-3}\oplus\g_{-2}\oplus\g_{-1}$ though the gradation is not
$P_1$ invariant. Furthermore $\g^{-1}\op{mod}\fp$ defines a $G_2$ invariant rank 2 distribution $\Pi$ on $M$ with growth vector $(2,3,5)$, which
exhibits ranks of the derived distributions $\Pi^2=[\Pi,\Pi]$ and $\Pi^3=[\Pi,\Pi^2]=TM$.

We use the coordinates $(x,y,p,q,z)$ on $M$ obtained from the
model Monge equation $y''=(z')^2$ with local $G_2$ symmetry.
Namely, denoting $y'=p$ and $y''=q$ the distribution $\Pi$
is induced from the Cartan distribution in jets and has the following expression:
 \begin{equation}\label{Pi}
\Pi=\langle \p_x+p\p_y+q\p_p+q^2\p_z,\p_q\rangle.
 \end{equation}
The $G_2$ invariant conformal structure $[g]$ is given by the representative
 \begin{equation}\label{conf}
g=q^2dx^2-2q\,dx\,dp+6p\,dx\,dq-3\,dx\,dz-6\,dy\,dq+4\,dp^2.
 \end{equation}

Let $J^k(M,1)$ denote the space of $k$-jets of unparametrized regular
curves (for details on jet-spaces we refer to \cite{KL0}).
These will be represented as parametrized curves $\gamma:I\to M$
modulo the right action of the pseudogroup $\op{Diff}_\text{loc}(\R)$
of reparametrizations.
A 0-jet is just $\gamma(0)\in M$, while 1-jet is $\dot\gamma(0)\neq0$
up to rescaling. Thus $J^0(M,1)=M$
and $J^1(M,1)=\mathbb{P}TM$. The fiber of the latter bundle over $o\in M$
will be identified with $(T_oM\backslash\{0\})/\R_\times\simeq\mathbb{P}\m$,
on which $P_1$ acts.

A general curve, transversal to the foliation $\{x=\op{const}\}$,
can be parametrized as $y=y(x)$, $p=p(x)$, $q=q(x)$, $z=z(x)$.
This introduces an affine chart in $J^k(M,1)$ with coordinates
$x,y,p,q,z$ and $y_i,p_i,q_i,z_i$ for $1\leq i\leq k$.
The action of $G_2$ will be expressed in these coordinates, as well as
the invariants of the action. We note that the orbits of $G_2$
in $J^k(M,1)$ are bijective with the orbits of $P_1$ on $J^k_o(M,1)$,
and we begin with a discussion of $k=1$ case.

\subsection{Action and orbits of $P_1$ on 1-jet}
It is convenient to describe the action on the vector space $\m$,
and then pass to the corresponding projective space (here 
when describing the $P$ action we use linear coordinates on $\m$,
and then return to the coordinates on $M$ used before).

A basis $e_1,e_2$ of $\g_{-1}$ induces bases $e_3=[e_1,e_2]$ of
$\g_{-2}$ and $e_4=[e_1,e_3]$, $e_5=[e_2,e_3]$ of $\g_{-3}$.
Thus we get a basis of $\m\simeq\oplus_{i=-3}^{-1}\g_i$.
This introduces coordinates $v_i$ on $\m$ considered as a
graded nilpotent Lie algebra
 \[
v= v_1e_1+v_2e_2+v_3e_3+v_4e_4+v_5e_5\in \mathfrak{m}\,,
 \]
and hence also on $\exp(\m)\simeq M$ via the exponential map. Note that
the unity $o$ corresponds to 0 and the inverse of $g\in\exp(\m)$ is $-g$.
(The Lie bracket on $\m$ induces the group structure on $\exp(\m)$ by the Baker-Campbell-Hausdorff formula that is finite
due to nilpotency of $\m$.)

In these coordinates the canonical conformal structure \eqref{conf}
has constant coefficients:
 \begin{equation}\label{conf2}
\langle v,w\rangle = v_1w_5+v_5w_1-v_2w_4-v_4w_2+v_3w_3.
 \end{equation}
Indeed, this is a unique (up to scale) left-invariant metric on $M$ that
is also invariant with respect to semi-simple part of the reductive
component $GL_2$ of $P_1$ (this preserves the grading on $\m$) and
has weight 4 with respect to its center:
 \[
\langle\op{Ad}_{g}(v),\op{Ad}_{g}(w)\rangle=\langle v,w\rangle
\quad\forall\,g\in\exp(\m)
 \quad\text{ and }\quad
\langle Av,Aw\rangle=(\det A)^2\langle v,w\rangle\quad \forall\,A\in GL_2\,.
 \]

To obtain $P_1$ action on $\m=\g/\fp$ we derive first the action of
$\fp_+$ using the root diagram of $\g$. We choose the basis $f_i$
of $\fp_+\simeq\m^*$ dual to $e_i$\,: these are the root vectors
indicated near the corresponding roots on the diagram below
(gradation is shown by a family of parallel lines). Their commutator
relations are $[f_1,f_2]=f_3$, $[f_1,f_3]=f_4$, $[f_2,f_3]=f_5$.

\begin{center}
\begin{tikzpicture}
\draw[gray, xshift=9mm, yshift=12mm] (150:2) to (150:-2);
\draw[gray, xshift=3mm, yshift=4mm] (150:2) to (150:-2);
\draw[gray, xshift=6mm, yshift=8mm] (150:2) to (150:-2);
\draw[gray] (150:2) to (150:-2);
\draw[gray, xshift=-3mm, yshift=-4mm] (150:2) to (150:-2);
\draw[gray, xshift=-6mm, yshift=-8mm] (150:2) to (150:-2);
\draw[gray, xshift=-9mm, yshift=-12mm] (150:2) to (150:-2);
\draw[<->] (0:1)node[right]{$f_1$} to (0:-1) node[left]{$e_1$};
\draw[<->] (120:1)node[above]{$f_2$} to (120:-1)node[below]{$e_2$};
\draw[<->] (60:1)node[above]{$f_3$} to (60:-1)node[below]{$e_3$};
\draw[<->] (30:{sqrt(3)})node[above]{$f_4$} to (30:-{sqrt(3)}) node[below]{$e_4$};
\draw[<->] (90:{sqrt(3)})node[above]{$f_5$} to (90:-{sqrt(3)}) node[below]{$e_5$};
\draw[<->] (150:{sqrt(3)}) to (150:-{sqrt(3)});
\end{tikzpicture}
\end{center}

Brackets between basis elements in $ \mathfrak{p}_+ $ and $ \mathfrak{m} $ correspond, up to scale, to vector summation of the corresponding root vectors in the root diagram: $[e_\alpha,e_\beta]=k_{\alpha\beta}e_{\alpha+\beta}$.
The factors $k_{\alpha\beta}$ can be fixed by the condition
that the metric is invariant under $\fp_+$:
 \[
\langle\op{ad}_{f_i}v,w\rangle+\langle v,\op{ad}_{f_i}w\rangle=0.
 \]
The diagram implies that $\g_3=\langle f_4,f_5\rangle$ acts trivially on
$\m$ and $\fp_+$ acts trivially on $\g_{-1}$. The remaining commutation relations are as follows.
 \[
\begin{array}{lll}
[f_1,e_3] = e_2, & [f_2,e_3] = -e_1, & [f_3,e_3] = 0,\\[2mm]
[f_1,e_4] = e_3, & [f_2,e_4] = 0, & [f_3,e_4] = e_1,\\[2mm]
[f_1,e_5] = 0, & [f_2,e_5] = e_3, & [f_3,e_5] = e_2.
\end{array}
 \]

Thus the action of $\exp(\fp_+)$ is encoded through the action of
$\rho=\exp(s_1f_1+s_2f_2+s_3f_3)$ with real parameters
$s_1,s_2,s_3$ as follows:
 \begin{multline*}
\rho(v_1,v_2,v_3,v_4,v_5)=\hspace{17pt}\\
\hspace{17pt}\bigl(v_1-s_2v_3+(s_3-s_1s_2)v_4-\tfrac12s_2^2v_5,
v_2+s_1v_3+\tfrac12s_1^2v_4+(s_3+s_1s_2)v_5,
v_3+s_1v_4+s_2v_5,v_4,v_5\bigr).
 \end{multline*}

With these at hand we can now compute that,
under the action of $P_1$, a general point
$v_1e_1+v_2e_2+v_3e_3+v_4e_4+v_5e_5\in\m\setminus\{0\}$ can be mapped
to one of the following 5 representatives:
 \begin{itemize}
\item $e_5\pm e_1$ if $(v_4,v_5)\neq(0,0)$ and $2v_1v_5-2v_2v_4+v_3^2\gtrless0$,
\item $e_5$ if $(v_4,v_5)\neq(0,0)$ and $2v_1v_5-2v_2v_4+v_3^2=0$,
\item $e_3$ if $v_4=v_5=0$ and $v_3\neq0$,
\item $e_1$ if $v_3=v_4=v_5=0$.
 \end{itemize}

Associated to the conformal structure \eqref{conf2} is the null cone
 \[
N= \left\{v_1e_1+v_2e_2+v_3e_3+v_4e_4+v_5e_5 \in \mathfrak{m}\,\,\middle|\,\, 2v_1v_5-2v_2v_4+v_3^2=0\right\}\,.
 \]
Its relation to the distributions $\Pi$ and $\Pi^2$ is the following:
 \[
N\cap\Pi^2 = \Pi\,.
 \]

Each of the 5 points above and the singular orbit 0 represent orbits
of the action of $P_1$ on $\m$. Therefore there are 5 orbits of
the action of $P_1$ on $\mathbb{P}\m$.

\begin{center}
\begin{tikzpicture}[scale=0.5]
\draw[left color=gray!50, right color=gray!50, middle color=white] (3.88,-3.88) to (0,0) to (-3.88,-3.88) arc (165:375:4.02 and 1);
\draw[fill=gray!50] (-4.45, 3.5) to (4, -4.95) to (4.1, -3.2) to (-4.1, 4.95) to cycle;
\draw[left color=gray!50, right color=gray!50, middle color=white] (3.82,3.82) to (0,0) to (-3.82,3.82);
\draw[bottom color=gray!60, top color=white] (0,4.01) ellipse (3.88 and 1);
\draw[dashed] (-3.82,-3.82) arc (165:15:3.88 and 0.8);
\draw[black!80, line width=0.5mm] (-4.3,4.3) to (4,-4);
\draw[densely dotted] (0,3) to (0,6) (0,-5.2) to (0,-6);
\draw[fill] (0,4.5) circle (0.2);
\draw[fill] (4.5,0) circle (0.2);
\draw[fill=white] (-3,-3) circle (0.2);
\draw[fill=white] (-3,2.5) circle (0.2);
\draw[fill=white] (3,-3) circle (0.2);
\draw[fill=white] (0,0) circle (0.2);
\node at (-4.7,-4){$ N $};
\node at (-4.5,2.6){$ \Pi^2 $};
\node at (-4.7,4.3){$\Pi $};
\node[fill=white] at (-4.7,0){$ \mathfrak{m}$};
\end{tikzpicture}
\end{center}

The points $e_5\pm e_1$ represent 2 open orbits, separated by the null cone
(both orbits connected, the above 3D picture is just an analogy).
The other orbits are (projectivizations of)
$\Pi\setminus\{0\}\ni e_1$, $\Pi^2\setminus\Pi\ni e_3$,
$N\setminus\Pi\ni e_5$.

\subsection{Number of invariants}\label{S12}

Define $s_k$ to be the transcendence degree of the field of
rational differential invariants of the $G_2$ action on $J^k(M,1)$.
By \cite{KL} this value is equal to codimension of the regular orbit
of the action, and so $s_k$ can be interpreted as the number of
invariants of order $k$. By the preceeding computations, $s_0=s_1=0$.

Hilbert function is defined as $h_k=s_k-s_{k-1}$ and it can be interpreted
as the number of invariants of pure order $k$.
We will consider curves of a fixed type $\texttt{t}$ of their 1-jet.
In other words, we assume that at any point 1-jet of the curve belongs
to the same $P_1$ orbit in $\mathbb{P}\m$.

If $d_\texttt{t}$ is dimension of the orbit, then such curves are
given by $4-d_\texttt{t}$ equations of the first order.
Geometrically this specifies a submanifold $\E_\texttt{t}\subset J^1(M,1)$
of codimension $4-d_\texttt{t}$, and prolongations
$\E^k_\texttt{t}:=\E^{(k-1)}_\texttt{t}\subset J^k(M,1)$ form a tower
of bundles with the rank of $\pi_{k,k-1}:\E^{k}_\texttt{t}\to\E^{k-1}_\texttt{t}$ equal
to $d_\texttt{t}$.
Because $G_2$ is finite-dimensional, we will occasionally have $h_k=d_\texttt{t}$ for $k\gg1$.

Let us consider the case of open orbits in $J^1(M,1)$.
Even though there are two types of such, the count is the same because
under complexification these become one orbit and the action is algebraic.
Due to transitivity we can fix 0-jet to be $a_0=(0,0,0,0,0)$
in $(x,y,p,q,z)$ coordinates. A generic 1-jet $a_1$ over $a_0$
can be represented by $(y_1,p_1,q_1,z_1)=(0,0,0,-1)$;
note that $z_1=+1$ for the other open orbit but we will focus on the first.

Denote by $\fp^{(k)}$ the prolongation of the isotropy $\fp$ of
the point $o=a_0$ in $\g$. The action in the fibers of $\pi_{k,k-1}$ for
$k>1$ is affine, so we can restrict to usage of Lie algebras (not groups).
Denote the isotropy subalgebra at the point $a_1$ by
 $$
\op{stab}_{a_1}=\{v\in\fp^{(1)}\,:\,v(a_1)=0\}
 $$
and by $\op{stab}_{a_1}^k$ its prolongations to $k$-jets.
As an abstract Lie algebra, $\op{stab}({a_1})$ is solvable and
it is defined by the following structure relations
 \begin{gather*}
[s_1,s_2]=s_2,\quad [s_1,s_3]=s_3,\quad [s_1,s_4]=s_4,\\
[s_1,s_5]=2s_5,\quad [s_2,s_4]=3s_5,\quad [s_3,s_4]=4s_5.
 \end{gather*}

The action of $\op{stab}_{a_1}^2$ on $\pi_{2,1}^{-1}(a_1)\simeq\R^4(y_2,p_2,q_2,z_2)$
is generated by the following vector fields
 \[
\p_{p_2}-z_2\p_{q_2}+4y_2\p_{z_2},\quad
\p_{q_2},\quad \p_{z_2},\quad p_2\p_{p_2}+2q_2\p_{q_2}+z_2\p_{z_2},
\quad 3y_2\p_{p_2}+4p_2\p_{q_2}
 \]
and so the orbits have dimension 3.
We have one invariant $I_2\!\!'=y_2$ in $\pi_{2,1}^{-1}(a_1)$.
Let $a_2=(I_2\!\!',0,0,0)\in J^2(M,1)$ be a point above $a_1$
(the expression for $I_2\!\!'$ is invariant only above $a_1$).

The isotropy algebra of $a_2$ is a 2-dimensional solvable subalgebra
$\op{stab}_{a_2}\subset\fp^{(2)}$. Its prolongation to 3-jets,
namely to $\pi_{3,2}^{-1}(a_2)\simeq\R(y_3,p_3,q_3,z_3)$, has generators
 \[
3y_3\partial_{p_3} + (3y_2z_3 + 4p_3)\partial_{q_3} - 12y_2y_3\partial_{z_3}\,,\quad
y_3\partial_{y_3} + 2p_3\partial_{p_3} + 3q_3\partial_{q_3} + 2z_3\partial_{z_3}\,,
 \]
and so we get 2 independent invariants in $\pi_{3,2}^{-1}(a_2)$.
The isotropy algebra of a generic point in 3-jet is already trivial,
so there will be 4 more independent invariants for every jet of order $k\ge4$.

The count of invariants for other types $\texttt{t}$ of 1-jet
is performed similarly, so we omit the details.
Summarizing, the Hilbert function $h_k$ counting
differential invariants is given in the table:

\[
\begin{array}{|c|cccccccccccl|}
\hline
\texttt{t}\quad \diagdown\quad k & 0 & 1 & 2 & 3 & 4 & 5 & 6 & 7 & 8 & 9 & 10 & \dots\\\hline
TM\backslash (N \cup \Pi^2)& 0 & 0 & 1 & 2 & 4 & 4 & 4 & 4 & 4 & 4 & 4 & 4 \\
N\backslash\Pi^2 & 0 & 0 & 0 & 1 & 2 & 3 & 3 & 3 & 3 & 3 & 3 & 3\\
\Pi^2\backslash\Pi & 0 & 0 & 0 & 0 & 0 & 1 & 2 & 2 & 2 & 2 & 2 & 2\\
\Pi\backslash \left\{ 0 \right\}  & 0 & 0 & 0 & 0 & 0 & 0 & 0 & 0 & 0 & 0 & 1 & 1\\\hline
\end{array}
\]

\medskip

Let us observe from the first two rows that for the corresponding
$\texttt{t}$-types there is dimensional freedom for the group
to act freely on the level of 2- and 3-jets, respectively, yet there
appear invariants. The situation with the last two rows is what could
be expected in a general position.

In what follows we will focus on two particular cases: curves with minimal
and maximal $\texttt{t}$-types of 1-jet (last and first rows) for
which we describe the algebra of differential invariants explicitly.
These algebras of differential invariants will be denoted by
$\mathcal{A}_\imath$ and $\mathcal{A}_g$ respectively.

\subsection{Invariants of integral curves}
\label{section_integral_curves1}

Consider curves in $M$ tangent to $\Pi$. For such there are no absolute differential invariants up to jet-order 9, and the first invariant
arises in order 10. There are however relative differential invariants:
such functions $R$ of order $k$ satisfy $L_vR=\alpha(v)R$ for
$v\in\g^{(\infty)}$ and $\alpha\in C^\infty(J^k)\otimes\g^*$.
We find those by the method of Sophus Lie, namely via a computation
of the loci where the rank of prolongations $e_j^{(k)}\in\mathfrak{X}(J^k)$
drop, for a basis $e_j\in\g$.

The integral curves are subject to the constraints
 \begin{equation}\label{IC1}
y_1=p,\ p_1=q,\ z_1=q^2,
 \end{equation}
their prolongation define the equation $\E_\Pi\subset J^\infty(M,1)$
with coordinates $x,y,p,q,z,q_k$, $k\ge1$.

The simplest relative differential invariant is $q_2$.
Next, such invariant arises in order 8, namely
 \begin{multline*}
R_8 = 196\,q_2^5q_8-2352\,q_2^4q_3q_7-5040\,q_2^4q_4q_6
-3255\,q_2^4q_5^2+16632\,q_2^3q_3^2q_6+59598\,q_2^3q_3q_4q_5\\
+13772\,q_2^3q_4^3-83160\,q_2^2q_3^3q_5-174735\,q_2^2q_3^2q_4^2
+297000\,q_2q_3^4q_4-118800\,q_3^6.
 \end{multline*}

One more relative invariant of order 10 is given by the formula
 \[
R_{10} = 21q_2R_8\mathcal{D}_x\bigl(q_2\mathcal{D}_xR_8\bigr) -
\frac{91}{4}\bigl(q_2\mathcal{D}_xR_8\bigr)^2 + 9R_8^2\bigl(13q_3^2-19q_2q_4\bigr).
 \]

The two latter relative invariants have proportional weights (that is, 1-forms
$\alpha$), which makes a combination of them an absolute
differential invariant
 \[
I_{10} = \frac{R_{10}^3}{R_8^7}.
 \]

This is accompanied by an invariant derivation, i.e. a linear map
$\Box:C^\infty(J^k)\to C^\infty(J^{k+1})$ that satisfies the
Leibniz rule and commutes with the action of $G_2$.
We search for it in the form $\Box=h\mathcal{D}_x$,
$h\in C^\infty(J^k)$, where
 \[
\D_x=\p_x+p\p_y+q\p_p+q^2\p_z+\sum_{i=0}^\infty q_{i+1}\p_{q_i}
 \]
is the operator of total derivative on $\E_\Pi$. Then the invariance condition
 \[
[v^{(\infty)},\Box]=0\quad\forall v\in\g
 \]
applied to $x$ writes for a basis $e_j\in\g$
 \[
L_{e_j^{(k)}}(h)=h\mathcal{D}_x(e_j(x)).
 \]
Finding $h$ from this equation, we get the following invariant derivation
($\imath$ for `{\i}ntegral'):
 \[
\Box_\imath= \frac{q_2}{R_8^{1/6}}\mathcal{D}_x.
 \]
The invariant derivation $\Box_\imath$ produces the next differential invariant
$I_{11}=\Box_\imath(I_{10})$ of order 11, then
$I_{12}=\Box_\imath(I_{11})$ of order 12, and successively generates
all the higher order invariants.

 \begin{theorem}\label{thm1}
The algebra $\mathcal{A}_\imath$ of (micro-local) differential invariants
of integral curves is generated in the Lie-Tresse sense by $I_{10}$ and \,$\Box_\imath$.
 \end{theorem}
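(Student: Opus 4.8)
The plan is to combine the Hilbert-function count recorded in the table with the Lie--Tresse finiteness theorem of \cite{KL} and an explicit functional-independence argument for the iterated derivatives $\Box_\imath^m(I_{10})$. For integral curves the last row of the table gives $h_k=0$ for $k\le 9$, $h_{10}=1$ and $h_k=1$ for all $k\ge 10$; summing, the transcendence degree of the field of rational invariants of order $\le k$ equals $s_k=\max(0,k-9)$. Thus there is exactly one new invariant in each order $\ge 10$, and it suffices to exhibit, in every order $k\ge 10$, a set of $k-9$ functionally independent invariants built from $I_{10}$ and $\Box_\imath$. The natural candidates are $I_k:=\Box_\imath^{\,k-10}(I_{10})$, which by construction is a differential invariant of order $k$, since $\Box_\imath$ raises order by one and $I_{10}$ has order $10$.

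The key step is to show that $I_{10},I_{11},\dots,I_k$ are functionally independent on the micro-local (Zariski-open) locus where $q_2$, $R_8$ and $R_{10}$ are nonzero. I would argue by a triangularity in the top-order jet variable. Since $\D_x=\p_x+p\p_y+q\p_p+q^2\p_z+\sum_{i\ge0}q_{i+1}\p_{q_i}$ raises order by one with leading term $q_{k+1}\p_{q_k}$, any function $F$ of order $k$ satisfies $\p(\D_xF)/\p q_{k+1}=\p F/\p q_k$, and hence $\p(\Box_\imath F)/\p q_{k+1}=(q_2/R_8^{1/6})\,\p F/\p q_k$. A direct inspection of $R_{10}$ shows that its coefficient of $q_{10}$ equals $21\cdot196\,q_2^7R_8$, so $\p I_{10}/\p q_{10}=3\cdot21\cdot196\,q_2^7R_{10}^2/R_8^{6}\neq 0$ on our locus. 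Feeding this into the displayed recursion and inducting on $k$ gives $\p I_k/\p q_k\neq 0$, while $I_{10},\dots,I_{k-1}$ have order $<k$ and are therefore independent of $q_k$. Consequently the Jacobian $(\p I_i/\p q_j)_{i,j=10}^{k}$ is lower triangular with nonvanishing diagonal, so $I_{10},\dots,I_k$ are functionally independent, matching $s_k=k-9$.

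Finally I would assemble the conclusion: in each order $k\ge10$ the $k-9$ invariants $I_{10},\dots,I_k$ form a transcendence basis of the field of rational invariants of order $\le k$, so every such invariant is obtained from $I_{10},\Box_\imath I_{10},\dots,\Box_\imath^{k-10}I_{10}$ by algebraic operations; as every differential invariant has finite order, the algebra $\mathcal{A}_\imath$ is generated in the Lie--Tresse sense by $I_{10}$ and $\Box_\imath$. The main obstacle is the passage from ``correct transcendence degree'' to ``full generation'': functional independence yields only a generically finite dominant map to the orbit space, and one must rule out a nontrivial algebraic extension. Here I would invoke the precise form of the Lie--Tresse theorem in \cite{KL}, which furnishes a finite generating set of invariants and derivations and whose notion of generation admits the algebraic operations responsible for the $R_8^{1/6}$ appearing in $\Box_\imath$; the one-dimensional fibers of $\pi_{k,k-1}\colon\E^k_\Pi\to\E^{k-1}_\Pi$ together with the single order-raising derivation then force the generated differential field to exhaust $\mathcal{A}_\imath$. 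A secondary point requiring care is confirming that the top-order dependence $\p I_k/\p q_k\neq0$ never cancels under iteration, which the recursion above makes transparent.
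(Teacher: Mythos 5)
Your proposal is correct and follows essentially the same route as the paper: the Hilbert-function count gives exactly one new invariant in each order $k\ge 10$, and the iterated derivatives $\Box_\imath^{k-10}I_{10}$ realize them, which is precisely the paper's (two-line) argument for the micro-local claim. Your explicit triangular-Jacobian verification that $\p I_k/\p q_k\neq 0$ (via the leading coefficient $21\cdot 196\,q_2^7R_8$ of $q_{10}$ in $R_{10}$) merely fills in the functional-independence step the paper leaves implicit; note also that for the \emph{micro-local} statement the implicit function theorem already suffices, the algebraic-extension subtlety you raise being the concern of Remark~\ref{rk1} about global rational invariants rather than of Theorem~\ref{thm1} itself.
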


 \begin{rk}\rm\label{rk1}
The invariant derivation $\Box_\imath$ has non-rational coefficient. This is
sufficient for micro-local invariants (defined in open non-invariant
sets in jets), however is at odd with the claim
that the global differential invariants are rational in jets \cite{KL}.
To remedy this one passes to rational invariant derivation
(Tresse derivative associated to $I_{10}$)
 $$
\bar\Box_\imath=\frac{d}{dI_{10}}:=\frac{I_{10}}{\Box_\imath(I_{10})}\cdot\Box_\imath.
 $$
Then the algebra $\bar{\mathcal{A}}_\imath$ of global differential invariants
for integral curves is generated by $I_{10}$, $\bar{I}_{11}=I_{11}^6$
and $\bar\Box_\imath$. Note that $\bar\Box_\imath(I_{10})=1$.
 \end{rk}

 \begin{proof}
By the count of invariants there is precisely one independent differential invariant
of pure order $k$ for any $k\ge10$. These are $\Box_\imath^{k-10}I_{10}$.
The micro-local claim follows.

To obtain the algebra of rational
differential invariants let us note that the invariants $\bar\Box_\imath^{k-11}\bar{I}_{11}$
are affine in jets of order $k>11$. Invariants of order $\leq11$ are
algebraically generated by $I_{10}$ and $\bar{I}_{11}$ since the ideal
generated by them (in the ring of rational functions that are polynomial
in jets of order $>9$) is radical. This proves the claim.
 \end{proof}

\subsection{Invariants of generic curves}\label{Sec_gen1}

Now we consider curves transversal to the distribution and not null
with respect to conformal structure \eqref{conf}. Investigation
of curves of both general type $\texttt{t}$ of 1-jets goes parallel,
so we may assume that the tangent $X=\dot\gamma$ to the curve
satisfies $g(X,X)>0$ (one has to take another normalization below for
$g(X,X)<0$).

When the curve is parametrized coordinately
$\gamma(t)=(x(t),y(t),p(t),q(t),z(t))$, 
its tangent vector
 \begin{equation}\label{XX}
X = \p_x+y_1\p_y+p_1\p_p+q_1\p_q+z_1\p_z
 \end{equation}
is given by the truncated total derivative.
Recall it is defined up to scale. We are going to exploit the change
of scales in order to construct an invariant frame along the curve.

The metric $g$ in \eqref{conf} is defined up to rescaling.
Another representative of $[g]$ is given by $\bar{g}=e^{2f}g$.
Let $\nabla,\bar\nabla$ be the Levi-Civita connections of $g,\bar{g}$.
They are related as follows:
 \begin{equation}\label{nabla}
\overline{\nabla}_XU= \nabla_XU+X(f)U+U(f)X-g(X,U)\nabla{f}
 \end{equation}
for $X,U\in\mathfrak{X}(M)$.
Therefore $\nabla_XU$ is defined up to $X$, $U$ and $\nabla{f}$. The
latter is difficult to control, so we will apply this formula only for
$g(X,U)=0$. Also, to obtain invariant quantities we can only differentiate
in the direction of the curve, so $X$ will be taken as in \eqref{XX}.

Since $\Pi^2/\Pi$ has rank 1, there is a conformal identification
$\Pi\simeq TM/\Pi^2$ based on $\g_{-1}\simeq[\g_{-1},\g_{-2}]=\g_{-3}$ of
\eqref{g2p1}. Since $\gamma$ is a generic curve, its tangent
$X\notin\Pi^2$ has a conformal dual $Y\in\Pi$.
For instance, choosing the vector
$e_3=[\p_q,\p_x+p\p_y+q\p_p+q^2\p_z]=\p_p+2q\p_z\in\Pi^2$
we find a unique $Y\in\Pi$ from
 \[
[Y,e_3] = X\,\op{mod}\Pi^2.
 \]
This $Y$ is defined up to scale and satisfies $g(X,Y)=0$.
Therefore the covariant derivative $\nabla_XY$ is defined up to $X,Y$
and determines unambiguously the subspace
 \[
\Pi_X= \langle X,Y,\nabla_XY\rangle\subset TM.
 \]
Generically $\Pi_X$ has rank 3, $\Pi_X\operatorname{mod}\Pi$ rank 2,
and $\Pi_X\operatorname{mod}\Pi^2$ rank 1.
We change the generator $\nabla_XY$ of $\Pi_X$ to
 \[
Z\in \Pi_X\cap\Pi^2\quad\text{ such that }\quad g(X,Z-X)=0.
 \]
This $Z$ is defined up to $Y$ and up to scale. However,
due to the above relation, $X$ and $Z$ are subject to rescaling
by the same factor. Therefore we have
 \[
\Pi_X=\langle X, Y, Z\rangle
 \]
and we use $Z$ to fix the scale of $Y$: the conformal identification
$[Y,Z]=X\,\op{mod}\Pi^2$ determines $Y$ uniquely.

The Gram matrix of $Y,Z,X$ (in this order) is
 \[
\left( \begin{array}{ccc}
0 & 0 & 0 \\
0 & \varkappa_2 & \varkappa_1 \\
0 & \varkappa_1 & \varkappa_1
\end{array} \right)
 \]
where $\varkappa_1=2R_1$, $\varkappa_2=\frac{R_1^4}{162R_2^2}$ and
 \begin{equation}\label{R1}
\begin{array}{l}
R_1 = (q+2p_1)^2 + 6(q_1(p-y_1)-qp_1) - 3z_1\,,\\[2mm]
R_2 =-\frac{(q + 2p_1)^3}{18} +(p-y_1)\left(qp_2-\frac{z_2}{2}\right) + (q+y_2)\left(qp_1-\frac{z_1}{2}\right) + p_1z_1 - q^2 \frac{y_2}{2}
\end{array}
 \end{equation}
are relative invariants of orders 1 and 2 respectively.
Their ratio
 \begin{equation}\label{I2}
I_2=\frac{\varkappa_1}{324\varkappa_2}=\frac{R_2^2}{R_1^3}.
 \end{equation}
is the first absolute differential invariant in $J^2(M,1)$;
note that this invariant, when restricted to $\pi_{2,1}^{-1}(a_1)$,
differs from $I_2\!\!'$ of subsection \ref{S12} only by a power and a factor:
$108I_2=(I_2\!\!')^2$.

As long as $\varkappa_1\neq \varkappa_2$, which is generically true, the conformal metric
has rank 2 on the 3-dimensional $\Pi_X$.
Hence $\Pi_X^\perp$ has rank 2 and $\Pi_X\cap\Pi_X^\perp=\langle Y\rangle$.
Choose
 \[
V\in\Pi\quad\text{ such that }\quad g(X,V-X)=0.
 \]
This $V$ is defined up to $Y$ and up to scale with the same factor as $X$.
Therefore we have
 \[
\Pi_X+\Pi_X^\perp = \langle V,Y,Z,X\rangle\,.
 \]
To complete to a (yet non-canonical) frame we add the vector
 \[
W= \nabla_X(Z-X).
 \]
Since $g(X,Z-X)=0$, this $W$ defined up to scale, and up to $X$ and $Z$, which in turn is defined up to $X$ and $Y$. The vectors $X$ and $W$ are independent mod $\Pi^2$.

The Gram matrix of $Y,V,Z,X,W$ (in this order) is
 \[
\left( \begin{array}{ccccc}
0 & 0 & 0 & 0 & \varkappa_3 \\
0 & 0 & 0 & \varkappa_1 & k_4 \\
0 & 0 & \varkappa_2 & \varkappa_1 & k_3 \\
0 & \varkappa_1 & \varkappa_1 & \varkappa_1 & k_2 \\
\varkappa_3 & k_4 & k_3 & k_2 & k_1
\end{array} \right)
 \]
where $\varkappa_3=2R_1\Bigl(108I_2-\frac13\Bigr)$ is a relative invariant.

Keeping track of the choices, we change our vectors and the Gram matrix changes accordingly. Precisely, the freedom we have in defining our vectors allows us to make the following transformations:
 \[
\begin{array}{lll}
X & \mapsto & c_1X\,,\\[2mm]
Y & \mapsto & \phantom{c_1}Y\,,\\[2mm]
Z & \mapsto & c_1Z + c_2Y\,,\\[2mm]
V & \mapsto & c_1V + c_3Y\,,\\[2mm]
W &\mapsto & c_1^2W + c_4Z -(c_4+c_1c_2k_{5})X + c_5Y\,,
\end{array}
 \]
where $k_5 = \frac{p_1-q}{3}+\frac{3R_2}{R_1}$ and we have 5 degrees of freedom given by $c_1,c_2,c_3,c_4,c_5$. We find $c_2,c_3,c_4,c_5$
such that $k_1=k_2=k_3=k_4=0$. Under this transformation of our vectors,
the Gram matrix of $Y,V,Z,X,W$ takes the form
 \[
\left( \begin{array}{ccccc}
0 & 0 & 0 & 0 & c_1^2\varkappa_3 \\
0 & 0 & 0 & c_1^2\varkappa_1 & 0\\
0 & 0 & c_1^2\varkappa_2 & c_1^2\varkappa_1 & 0\\
0 & c_1^2\varkappa_1 & c_1^2\varkappa_1 & c_1^2\varkappa_1 & 0\\
c_1^2\varkappa_3 & 0 & 0 & 0 & 0
\end{array} \right) \,.
 \]
It only remains to fix the scale of $X$ and the scale of the metric $g$.

We fix $c_1$ by the condition $\mathcal{L}_XI_2=1$, that is, by setting
 \[
c_1 = \frac{1}{\mathcal{D}_xI_2},
 \]
where $\mathcal{D}_x$ is the operator of total derivative:
 \[
\D_x=\p_x+\sum_{i=0}^\infty \Bigl(y_{i+1}\p_{y_i}+p_{i+1}\p_{p_i}+q_{i+1}\p_{q_i}+z_{i+1}\p_{z_i}\Bigr).
 \]
Then we fix the scale of the metric $\bar{g}=e^{2f}g$ by
 \begin{equation}\label{gg}
\bar{g}(X,X)=1.
 \end{equation}
This fixes an invariant frame adapted to the distribution and the 
conformal structure. With this the above Gram matrix
becomes
 \[
\left( \begin{array}{ccccc}
0 & 0 & 0 & 0 & 108I_2-\tfrac13 \\
0 & 0 & 0 & 1 & 0\\
0 & 0 & \tfrac1{324}I_2^{-1} & 1 & 0\\
0 & 1 & 1 & 1 & 0\\
108I_2-\tfrac13 & 0 & 0 & 0 & 0
\end{array} \right) \,.
 \]

The next step is to generate differential invariants of orders 3
and 4 in $J^4(M,1)$.
In order to do so first note that the new metric given by \eqref{gg}
involves rescaling depending on 3-jet, and so its Levi-Civita connection
is uncomputable (we can only differentiate along the curve), yet
formula \eqref{nabla} applied to $U\in\langle X\rangle^\perp$ has a
well-defined output $\bar{\nabla}_XU\,\op{mod}X$.

Let $w_i$ denote the basis $Y,Z-X,V-X,W$ of $\langle X\rangle^\perp$.
Then we decompose
 \[
\bar\nabla_Xw_i=\sum_{j=1}^4 a_{ij}w_j\,\op{mod}X,\quad 1\leq i\leq 4.
 \]
The coefficients $a_{ij}$ are differential invariants. Some of them
are constants or expressed through $I_2$, some are related due to the
fact that $\bar\nabla_X\bar{g}=0$, but the others will determine
2 differential invariants $I_{3a},I_{3b}$ of order 3
and 4 differential invariants $I_{4a},I_{4b},I_{4c},I_{4d}$
of order 4 (all rational and independent).
The formulae are too large to be given explicitly, but in
$\pi_{4,2}^{-1}(a_2)$ the invariants $I_{3a},I_{3b}$ are equal to
$\frac{4I_2p_3+z_3}{y_3^2}$, $\frac{16p_3^2-24y_3q_3-3z_3^2}{y_3^4}$,
while $I_{4j}$ are affine in $y_4,p_4,q_4,z_4$.

Finally the normalized $X$ yields the invariant derivation
(Tresse derivative associated to $I_2$)
 \[
\Box_g=\frac{d}{dI_2}:= \frac{1}{\mathcal{D}_xI_2}\cdot\mathcal{D}_x
 \]
which on scalars coincides with $L_X=\bar\nabla_X$.

 \begin{theorem}\label{Th2}
The algebra $\mathcal{A}_g$ of differential invariants of generic curves
is generated in the Lie-Tresse sense by seven differential invariants
$I_2,I_{3i},I_{4j}$ and one invariant derivation $\Box_g$.
 \end{theorem}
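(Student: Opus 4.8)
The plan is to run the same counting-plus-derivation argument used for Theorem~\ref{thm1}, now with seven low-order seeds instead of one. By the Lie--Tresse theorem \cite{KL} the algebra of rational differential invariants is finitely generated by finitely many invariants and invariant derivations, so it suffices to show that $I_2$, $I_{3a},I_{3b}$, $I_{4a},\dots,I_{4d}$ together with $\Box_g$ generate everything: every rational differential invariant should be a rational function of these seeds and their iterated $\Box_g$-derivatives. Since $\Box_g=\frac{1}{\D_xI_2}\,\D_x$ is rational in jets (because $I_2$ is rational and $\D_xI_2\neq0$ on the generic locus), the algebra produced is already the algebra of \emph{global} rational invariants, and no passage to a barred derivation as in Remark~\ref{rk1} is required. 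The two sign-types $g(X,X)\gtrless0$ are handled by the identical argument.

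First I would read off the bookkeeping from the Hilbert function of Subsection~\ref{S12}: $h_2=1$, $h_3=2$, $h_4=4$, and $h_k=4$ for every $k\ge4$. The rows for $k\le4$ say there is one invariant of pure order $2$, two of pure order $3$, and four of pure order $4$; by construction these are precisely $I_2$, then $I_{3a},I_{3b}$, then $I_{4a},\dots,I_{4d}$, which were produced as rational and functionally independent. Hence the seven seeds already exhaust all differential invariants of order $\le4$, and the task reduces to producing the four new invariants that appear at each order $k\ge5$.

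The mechanism is a principal-symbol computation showing that $\Box_g$ raises the order by exactly one without degenerating. The invariants $I_{4j}$ are affine in the top jets $(y_4,p_4,q_4,z_4)$ with a coefficient matrix $M$ of lower order, and their stated independence is exactly the condition $\det M\neq0$. Applying $\D_x$ and using $\D_xy_4=y_5$, $\D_xp_4=p_5$, and so on, every other contribution has order $\le4$, so $\D_xI_{4j}$ is again affine in $(y_5,p_5,q_5,z_5)$ with the \emph{same} matrix $M$; dividing by the nonvanishing order-$3$ function $\D_xI_2$ rescales $M$ but keeps it invertible. Iterating, $\Box_g^{\,m}I_{4j}$ is affine in the order-$(4+m)$ jets with coefficient matrix $(\D_xI_2)^{-m}M$, still nondegenerate, so it furnishes four functionally independent invariants of pure order $4+m$. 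Matching this against $h_{4+m}=4$ shows these four span, modulo functions of lower order, the whole space of invariants of pure order $4+m$; thus every invariant of order $\le k$ is rational in the seeds and in $\Box_g^{\,m}I_{4j}$ for $m\le k-4$, which is the asserted Lie--Tresse generation.

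The conceptual heart is the symbol shift above, but the genuine obstacle is the base case: establishing that $I_{3a},I_{3b},I_{4a},\dots,I_{4d}$ are rational, of the claimed pure orders, and functionally independent — equivalently that $M$ is invertible — rests on the explicit invariant-frame construction and the accompanying large Maple computation, and this is where the real work sits. Granting it, the induction is purely formal, since the lower-order corrections are invisible to the symbol and $\det M\neq0$ propagates verbatim. The only remaining point to verify is that the entire argument lives on a single $G_2$-invariant open set — the generic locus where $\D_xI_2\neq0$ and the frame constructions (such as $\varkappa_1\neq\varkappa_2$) are valid — and that this locus is preserved under prolongation, which holds because each defining inequality is an invariant condition of bounded jet-order.
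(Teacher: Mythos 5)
Your argument is correct and is essentially the paper's own proof, just written out in more detail: the paper likewise observes that the seven invariants exhaust all invariants of order $\le 4$ and that the invariant derivation produces invariants affine in the jets of each order $k>4$, in number equal to the fiber dimension, whence rational generation follows. Your explicit symbol computation (the coefficient matrix $M$ being transported unchanged, up to the factor $(\D_xI_2)^{-1}$, under each application of $\Box_g$) is a faithful expansion of the paper's one-line claim, and your remark that no barred derivation is needed because $\Box_g$ is already rational matches the paper's treatment.
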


 \begin{proof}
By construction the seven invariants generate all differential invariants
of order $\leq4$. Invariant derivation provides independent invariants affine
in jets of order $k>4$ in the totality equal to the number of those jets.
Thus any differential invariant from $\mathcal{A}_g$ can be rationally expressed
through the given generators.
 \end{proof}

\section{Invariants of curves in $\M^6=G_2/P_{12}$}\label{S2}

Associated with $P_{12}$ is the gradation of $\g=\op{Lie}(G_2)$ of depth 5
 \begin{equation}\label{g2p12}
\g=\g_{-5}\oplus\g_{-4}\oplus\g_{-3}\oplus\g_{-2}\oplus\g_{-1}\oplus\g_0\oplus\g_1\oplus\g_2\oplus\g_3\oplus\g_4\oplus\g_5
 \end{equation}
with $\dim\g_{\pm1}=\dim\g_0=2$ and $\dim\g_i=1$ otherwise.
The filtration $\g^i$ is introduced as in Section \ref{S1} and it defines
the distribution $\Delta$ of growth $(2,3,4,5,6)$ invariant with
respect to $G_2$; the stabilizer of $o\in\hat M$ is
$P_{12}= (\R^1_\times\times\R^1_\times)\ltimes\exp(\fp_+)$.

To introduce coordinates on $\hat{M}$ it is convenient to identify it with
the prolongation of $(M,\Pi)$, namely the $\P^1$ bundle $\P\Pi$ over $M$:
its points are $\hat{a}=(a,\ell)$, where $a\in M$ is a point and $\ell\subset\Pi_x$
is a line. Thus we can use the coordinates $(x,y,p,q,z,r)$ for an open chart in $\hat{M}$,
where the first 5-tuple gives a chart in $M$ as in Section \ref{S1} and the line $\ell$
has coordinates $[1:r]$ in the basis \eqref{Pi} of $\Pi$. This gives the representation
 \begin{equation}\label{PPi}
\Delta=\hat{\Pi}=\langle\p_x+p\p_y+q\p_p+q^2\p_z+r\p_q,\p_r\rangle
 \end{equation}
where both generators are distinguished: the first by the prolongation procedure
described above and the second as the kernel of the differential of
the projection $\pi_l:\M\to M$.

This projection relates the derived distributions as follows:
$\pi_l^{-1}(\Pi)=\Delta^2$ and $\pi_l^{-1}(\Pi^2)=\Delta^3$. In addition,
the pullback of the conformal structure on $M$ gives a degenerate conformal structure on $\M$ with the null cone $\hat{N}=\pi_l^{-1}(N)\simeq N\times\R^1$ in $T\M$.

We parametrize the curves again by $x$, so the jet-coordinates on $J^\infty(\M,1)$
are $(x,y,p,q,z,r)$ and $(y_k,p_k,q_k,z_k,r_k)$. The null cone is given
by the condition $R_1=0$ of the $G_2/P_1$ case \eqref{R1}.
The equation for derived flag is given by the following conditions
 \[
\begin{array}{ll}
\E_\Delta \ \,= &\! \left\{ q_1 = r, p_1 = q, y_1=p, z_1 = q^2 \right\} \\[2mm]
\E_{\Delta^2} \,= &\! \left\{p_1 = q, y_1=p, z_1 = q^2 \right\} \\[2mm]
\E_{\Delta^3} \,= &\! \left\{y_1=p, z_1 = 2qp_1 - q^2 \right\} \\[2mm]
\E_{\Delta^4} \,= &\! \left\{z_1 = 2pr  -2ry_1 + 2qp_1 -q^2 \right\} \\[2mm]
\end{array}
 \]
that determine some types of 1-jets; curves with the given fixed type
are solutions to the corresponding prolonged first order systems
$\E_{\Delta^s}\subset J^\infty(\hat{M},1)$.
There are however more types of 1-jets of curves in $\hat{M}$
as we will describe next.

\subsection{Action and orbits of $P_{12}$ on 1-jets}

The generators of \eqref{PPi} correspond to a basis $e_1,e_2$ of
$\g_{-1}$ in \eqref{g2p12}, which generates a basis $\{e_i\}$ of $\m$ via
commutation:
 \begin{gather*}
[e_1,e_2]=e_3,\ [e_1,e_3]=e_4,\ [e_1,e_4]=e_5,\
[e_3,e_4]=e_6,\ [e_2,e_5]=-e_6,\\
[f_1,f_2]=f_3,\ [f_1,f_3]=f_4 ,\ [f_1,f_4]=f_5,\
[f_3,f_4]=f_6,\ [f_2,f_5]=-f_6.
 \end{gather*}

This and its dual basis $\{f_i\}$ of $\fp_+$ are indicated on
the root diagram as before.

\begin{center}
\begin{tikzpicture}
\draw[gray, xshift=5mm, yshift=15.5mm] (160.8:2) to (160.8:-2);
\draw[gray, xshift=4mm, yshift=12.4mm] (160.8:2) to (160.8:-2);
\draw[gray, xshift=3mm, yshift=9.3mm] (160.8:2) to (160.8:-2);
\draw[gray, xshift=2mm, yshift=6.2mm] (160.8:2) to (160.8:-2);
\draw[gray, xshift=1mm, yshift=3.1mm] (160.8:2) to (160.8:-2);
\draw[gray] (160.8:2) to (160.8:-2);
\draw[gray, xshift=-1mm, yshift=-3.1mm] (160.8:2) to (160.8:-2);
\draw[gray, xshift=-2mm, yshift=-6.2mm] (160.8:2) to (160.8:-2);
\draw[gray, xshift=-3mm, yshift=-9.3mm] (160.8:2) to (160.8:-2);
\draw[gray, xshift=-4mm, yshift=-12.4mm] (160.8:2) to (160.8:-2);
\draw[gray, xshift=-5mm, yshift=-15.5mm] (160.8:2) to (160.8:-2);
\draw[<->] (0:1)node[right]{$f_1$} to (0:-1) node[left]{$e_1$};
\draw[<->] (120:1)node[above]{$f_3$} to (120:-1)node[below]{$e_3$};
\draw[<->] (60:1)node[above]{$f_4$} to (60:-1)node[below]{$e_4$};
\draw[<->] (30:{sqrt(3)})node[above]{$f_5$} to (30:-{sqrt(3)}) node[below]{$e_5$};
\draw[<->] (90:{sqrt(3)})node[above]{$f_6$} to (90:-{sqrt(3)}) node[below]{$e_6$};
\draw[<->] (150:{sqrt(3)})node[above]{$ f_2 $} to (150:-{sqrt(3)})node[below]{$e_2$};
\end{tikzpicture}
\end{center}
From Serre's relations we find brackets involving $\g_0$:
 \[
\begin{array}{lllll}
[h_1,e_1]=2e_1 &
[h_1,e_2]=-3e_2 &
[h_2,e_1]= -e_1 &
[h_2,e_2]=2e_2 & [e_1,f_1]=h_1 \\[2mm]
[h_1,f_1]=-2f_1 &
[h_1,f_2]=3f_2 &
[h_2,f_1]=f_1 &
[h_2,f_2]=-2f_2 & [e_2,f_2]=h_2
\end{array}
 \]
The remaining structure relations are written basing on the root
arithmetic with unknown coefficients, which are then uniquely determined
from the Jacobi identity. 

With this knowledge we compute the action of $\fp_+$ on $\m$.
This in turn determines the action of
$\rho=\exp\Bigl(\sum_{k=1}^5s_kf_k\Bigr)\in\exp(\fp_+)$ on
$v=\sum_{k=1}^6v_ke_k$ ($f_6$ acts trivially) as follows:
 \[
\begin{array}{l}
\hspace{-3pt}
\rho(v_1,v_2,v_3,v_4,v_5,v_6)=\\[2mm]
\Bigl(v_1-s_2v_3+(4s_3-2s_1)v_4+(6s_1s_3-2s_1^2)v_5 +
(\tfrac12s_1^2s_2^2-6s_3^2+12s_2s_4)v_6,\\[2mm]
\phantom{1}
v_2+3s_1v_3+6s_1^2v_4+6s_1^3v_5-(6s_1^2s_3+\tfrac32s_1^3s_2+18s_1s_4)v_6,\\[2mm]
\phantom{1}
v_3+4s_1v_4+6s_1^2v_5-(2s_2+6s_3+\tfrac13s_4)v_6,
v_4+3s_1v_5-(3s_3+\tfrac32s_1s_2)v_6, v_5-s_2v_6, v_6\Bigr).
\end{array}
 \]
The group $G_0=\R_\times\times\R_\times$ action on $\g_{-1}$
extends to an automorphism of $\m$.
So we derive the action of $P_{12}=G_0\ltimes\exp(\fp_+)$ on
$\m\setminus\{0\}$ and this yields the decomposition into orbits as follows:

 \begin{itemize}
\item 3 orbits in $T\hat{M}\setminus\Delta^4$: one closed in $\hat{N}$ and two open separated by $\hat{N}$,
\item $\infty$ orbits in $\Delta^4\setminus(\Delta^3\cup H_3)$: there is an absolute invariant in $\Delta^4\setminus\hat{N}$,
\item 3 orbits in $H_3\setminus\Delta^3$: one closed in $\hat{N}$ and two open separated by $\hat{N}$,
\item 2 orbits in $\Delta^3\setminus\Delta^2$: one closed in $H_2$ and one open in the complement,
\item 1 orbit in $\Delta^2\setminus\Delta$,
\item 3 orbits in $\Delta\setminus\{0\}$: two lines and the complement.
 \end{itemize}
Here $H_2=\{a_1\in\Delta^3:h_2(a_1)=0\}$ and
$H_3=\{a_1\in\Delta^4:h_3(a_1)=0\}$
have the defining equations:
 \[
\begin{array}{ll}
\hspace{-3pt}
\,h_2 \,=&\!\! 8p_1r_1-8qr_1-3q_1^2+6rq_1-3r^2,\\[2mm]
h_3   \,=&\!\! 9p^2r_1+9pq_1p_1-9prp_1-9pqq_1+9pqr-18pr_1y_1+4p_1^3-12qp_1^2\\[2mm]
& +12q^2p_1-9p_1q_1y_1+9rp_1y_1-4q^3+9qq_1y_1-9qry_1+9r_1y_1^2.
\end{array}
 \]

We note that $h_2$ is a relative invariant in $\Delta^3$ and
$h_3$ is a relative invariant in $\Delta^4$.
Moreover, $h_3=R_2|_{\Delta^4}$, where $R_2$ is the same relative
invariant as in $G_2/P_1$ (note that the order of $R_2$ drops
to 1 when we restrict to the prolongation of the equation for $\Delta^4$).
Actually, the restriction of $I_2$ to ${\Delta^4}\setminus\hat{N}$ is an absolute differential invariant, where $I_2$ is the second order
differential invariant of generic curves in $G_2/P_1$ pulled back to
$G_2/P_{12}$.

The relative invariant $h_2$ comes from a relative invariant for
integral curves in $G_2/P_2$.

\subsection{Number of invariants}

Similar to what is done in Section \ref{S1},
we compute the Hilbert function $h_k$
counting the number of differential invariants of pure order $k$,
depending on the type $\texttt{t}$ of the orbit of 1-jet,
and tabulate it as follows.

 \[
\begin{array}{|l|ccccccccccl|}
\hline
\quad\texttt{t} \quad\diagdown\quad k
& 0 & 1 & 2 & 3 & 4 & 5 & 6 & 7 & 8 & 9 & \dots \\\hline
T\M\backslash(\Delta^4\cup\hat{N}) \vphantom{\frac{a}b}
& 0 & 0 & 2 & 5 & 5 & 5 & 5 & 5 & 5 & 5 & 5\\
\hat{N}\backslash\Delta^4 & 0 & 0 & 1 & 3 & 4 & 4 & 4 & 4 & 4 & 4 & 4\\
\Delta^4\backslash(\Delta^3\cup\hat{N}\cup H_3) & 0 & 1 & 0 & 3 & 4 & 4 & 4 & 4 & 4 & 4 & 4 \\
H_3\backslash(\Delta^3\cup\hat{N}) & 0 & 0 & 0 & 1 & 3 & 3 & 3 & 3 & 3 & 3 & 3 \\
(\Delta^4\cap\hat{N})\backslash(\Delta^3\cup H_3) & 0 & 0 & 0 & 1 & 3 & 3 & 3 & 3 & 3 & 3 & 3 \\
(\hat{N}\cap H_3)\backslash\Delta^3 & 0 & 0 & 0 & 0 & 0 & 2 & 2 & 2 & 2 & 2 & 2 \\
\Delta^3 \backslash (\Delta^2 \cup H_2) & 0 & 0 & 0 & 1 & 3 & 3 & 3 & 3 & 3 & 3 & 3 \\
H_2\backslash\Delta^2 & 0 & 0 & 0 & 0 & 0 & 2 & 2 & 2 & 2 & 2 & 2 \\
\Delta^2\backslash\Delta & 0 & 0 & 0 & 0 & 0 & 2 & 2 & 2 & 2 & 2 & 2 \\
\Delta\backslash\{0\} & 0 & 0 & 0 & 0 & 0 &0 & 0 & 0 & 0 & 1 & 1\\ \hline
\end{array}
 \]

\medskip

Again we consider in more details only differential invariants
of curves of constant type with either minimal
(actually next to it: integral) or maximal type $\texttt{t}$
of the orbit in 1-jets.

\subsection{Invariants of integral curves}

The first absolute differential invariant $\hat{I}_9$
of curves tangent to $\Delta$ in $\M=G_2/P_{12}$ occurs in order 9.
It coincides with the invariant $I_{10}$
for curves tangent to $\Pi$ in $G_2/P_1$ after the change of coordinates
$r_i=q_{i+1}$ $\forall i$. (Recall that the differential equation for $\Delta$ is obtained from that of $\Pi$ by intersecting with $q_1=r$
and its prolongation.)

An invariant derivation is
 \[
\widehat{\Box}_\imath= \frac{r_1}{\hat{R}_7^{1/6}}\widehat{\mathcal{D}}_x\,,
 \]
where $\hat{R}_7$ coincides with $R_8$ from Section \ref{section_integral_curves1} after the same change of coordinates, and
 \[
\widehat{\D}_x=\p_x+p\p_y+q\p_p+q^2\p_z+r\p_q
+\sum_{i=0}^\infty r_{i+1}\p_{r_i}
 \]
is the operator of total derivative on $\E_\Delta$.

Thus the algebra of differential invariants $\widehat{\mathcal{A}}_\imath$
is generated similarly to Section \ref{section_integral_curves1}, and this
is not surprising: there is a bijection between integral
curves of $\Pi$ and those of $\Delta=\hat\Pi$.
Indeed, the prolongation $\gamma(t)\mapsto(\gamma(t),\dot\gamma(t))$
lifts the integral curves of $\Pi$ to the integral curves of $\Delta$,
and the projection gives the inverse map.

\subsection{Invariants of generic curves}

We have the same relative invariants $R_1$ and $R_2$ as in $G_2/P_1$.
In addition, we have the first order relative invariant
 \[
R_3 = 2pr + 2qp_1 -2ry_1-q^2 -z_1\,,
 \]
that is the pull-back of the contact condition (integral curves) in
$G_2/P_2$.

Some other differential invariants have been computed,
like relative invariant $R_4$ of order 2 (in Maple),
but their formulae are long and we describe the algebra
$\hat{\mathcal{A}}_g$ of invariants differently.

For a generic curve $\hat{\gamma}(t)\subset\M$ its projection
$\gamma(t)=\pi_l\circ\hat{\gamma}(t)\subset M$ is also generic, and hence by
the results of Section \ref{Sec_gen1} possesses a frame
$Y,V,Z,X,W$ along it. A point $\hat\gamma(t)$ over $\gamma(t)$
can be interpreted as a line $\ell_{\gamma(t)}\subset\Pi_{\gamma(t)}$.
There exists a unique $\varrho=\varrho(t)\in\bar\R=\R\cup\infty$
such that $Y+\varrho V\in\ell_{\gamma(t)}$. This $\varrho$ is a function
on the curve, and it defines a rational function on the space of jets
of generic curves in $\M$, denoted by the same symbol.

Let us also note that the differential parameter along the curve
$\hat\gamma$ can be induced from the differential parameter along
its projection $\gamma$. In other words, the invariant derivation
$\Box_g$ from Section \ref{Sec_gen1} induces the following
invariant derivation in $J^\infty(\M,1)$:
 $$
\widehat{\Box}_g=\frac1{\D_x I_2}\cdot\hat{\D}_x,
 $$
where we use the operator of total derivative
 \[
\widehat{\D}_x=\p_x+\sum_{i=0}^\infty \Bigl(y_{i+1}\p_{y_i}+p_{i+1}\p_{p_i}+q_{i+1}\p_{q_i}+z_{i+1}\p_{z_i}
+r_{i+1}\p_{r_i}\Bigr).
 \]

 \begin{theorem}
The algebra $\hat{\mathcal{A}}_g$ of differential invariants of generic
curves in $\M$ is generated by the differential invariants from
Theorem \ref{Th2}, pulled back from $J^\infty(M,1)$ to $J^\infty(\M,1)$,
the invariant $\varrho$ and the derivation $\widehat{\Box}_g$.
 \end{theorem}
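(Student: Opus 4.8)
**The plan is to exhibit an explicit correspondence between jets of generic curves in $\M$ and jets of pairs (generic curve in $M$, point of $\varrho$), and then invoke the Lie–Tresse theorem.** Since $\M=\P\Pi$ is a $\P^1$-bundle over $M$ via $\pi_l$, and any generic curve $\hat\gamma\subset\M$ projects to a generic curve $\gamma=\pi_l\circ\hat\gamma\subset M$, I would first note that the data of $\hat\gamma$ is equivalent to the data of $\gamma$ together with the lift coordinate, encoded invariantly by the function $\varrho$ constructed above from the frame $Y,V,Z,X,W$ of Section~\ref{Sec_gen1}. Concretely, the map sending a $k$-jet of $\hat\gamma$ to the $k$-jet of $\gamma$ together with the jet of $\varrho$ along $\gamma$ should be shown to be a birational isomorphism of the relevant jet-spaces; the point is that $r$ is recovered from $\gamma$ and $\varrho$ (the line $\ell=\langle Y+\varrho V\rangle$ determines the extra coordinate $r$ of \eqref{PPi}), so passing to jets adds exactly one new scalar $\varrho_k$ at each order, matching the one extra curve-coordinate $r_k$ in $J^k(\M,1)$.

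Next I would treat the two generating mechanisms separately. The invariants $I_2,I_{3i},I_{4j}$ and the derivation $\Box_g$ of Theorem~\ref{Th2} are defined on $J^\infty(M,1)$ and pull back to $J^\infty(\M,1)$ along $\pi_l$; by construction these already generate, in the Lie–Tresse sense, all differential invariants that depend only on the projection $\gamma$. The new ingredient is $\varrho$, a genuinely $\M$-level rational invariant (it sees the line $\ell$, hence the fiber direction of $\pi_l$), and the induced derivation $\widehat\Box_g=\frac1{\D_xI_2}\cdot\hat\D_x$, which restricts on functions pulled back from $M$ to the old $\Box_g$ but also differentiates $\varrho$ and its derivatives. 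I would verify that $\widehat\Box_g$ is indeed $G_2$-invariant (it suffices that $\hat\D_x$ is the truncated total derivative and that $\frac1{\D_xI_2}$ is the same invariant normalizing factor as before, now viewed on $\M$), and that $\widehat\Box_g\varrho$, together with higher iterates $\widehat\Box_g^{\,k}\varrho$, produce one new independent invariant of each pure order $k\ge?$ along the fiber direction.

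The final step is the counting/completeness argument, exactly parallel to the proof of Theorem~\ref{Th2}. By the Lie–Tresse theorem (see \cite{KL}) it is enough to show that the proposed generators separate generic orbits up to some finite order and that higher-order invariants are obtained by repeated application of $\widehat\Box_g$. For orders $\le4$, the pulled-back invariants of Theorem~\ref{Th2} together with $\varrho$ (and possibly $\widehat\Box_g\varrho$, a relative invariant such as $R_3$ or $R_4$) give a full set because the fiber adds exactly one coordinate whose low jets are captured by $\varrho$. For $k>4$ the iterated derivatives $\widehat\Box_g^{\,k-4}$ of the order-$4$ invariants and of $\varrho$ are affine in the top jets of order $k$, and their total number equals the number of those jet-coordinates (now one more per order than in $M$, coming from $r_k\leftrightarrow\varrho_k$); hence any invariant is a rational function of the generators.

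\textbf{The main obstacle} I expect is the bookkeeping that $\varrho$ is well-defined and rational on the whole generic stratum — one must check $Y+\varrho V$ meets $\ell$ for a \emph{unique} finite or infinite $\varrho$ and that the frame $Y,V,Z,X,W$ really descends from $\gamma$ without introducing spurious dependence on the lift — and, relatedly, confirming that $\widehat\Box_g\varrho$ is independent from the pulled-back $I$'s so that the generator count on the fiber is correct. Once the birational jet-isomorphism $\hat\gamma\leftrightarrow(\gamma,\varrho)$ is established, the completeness follows formally from Theorem~\ref{Th2} and the Lie–Tresse theorem, so the substantive work is entirely in this correspondence and the independence of $\varrho$.
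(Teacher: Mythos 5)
Your proposal is correct and follows essentially the same route as the paper: the key point in both is that a generic curve $\hat\gamma\subset\M$ is equivalent data to its projection $\gamma\subset M$ together with the lift function $\varrho$, so that adjoining $\varrho$ (and the induced derivation) to the generators of $\mathcal{A}_g$ from Theorem~\ref{Th2} generates $\hat{\mathcal{A}}_g$. The paper states this in two sentences; your added bookkeeping on jet-coordinate counting and the invariance of $\widehat\Box_g$ is a more explicit rendering of the same argument, not a different one.
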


 \begin{proof}
A curve $\hat\gamma\subset\M$ is uniquely encoded by its projection
$\gamma\subset M$ and its enhancement $\ell_\gamma$, equivalently
represented by the function $\varrho$. Hence it suffices to add
this invariant to the generating set for $\mathcal{A}_g$ to generate
$\hat{\mathcal{A}}_g$.
 \end{proof}

\section{Invariants of curves in $K^5=G_2/P_2$}

Associated with $P_2$ is the contact gradation of $\g=\op{Lie}(G_2)$
 \begin{equation}\label{g2p2}
\g=\g_{-2}\oplus\g_{-1}\oplus\g_0\oplus\g_1\oplus\g_2
 \end{equation}
with $\dim\g_{\pm1}=4$, $\dim\g_{\pm2}=1$ and $\g_0=\mathfrak{gl}_2$.
The manifold $K=G_2/P_2$ possesses $G_2$-invariant contact structure
$D\subset TK$ and a field of rational normal curves (RNC) in $\mathbb{P}D$,
corresponding to the minimal orbit of (the reductive part of)
the structure group $G_0=GL_2$, also identified with its cone
field $\Gamma\subset D$ (in projectivization we will write $[\Gamma]$).

In coordinates $(x,y,p,q,z)$ on $K$ the contact structure is the
annihilator of $\alpha=dz-p\,dx-q\,dy$ and the rational normal cone
(also abbreviated RNC) is given by the following ideal in $S^\bullet D^*$:
 \[
\langle 3\,dx\,dp-dy\,dq, \sqrt{3}\,dx\,dy-dq^2,
\sqrt{3}\,dp\,dq-dy^2\rangle.
 \]
In other words, RNC is given by the union of 1-parametric family of lines
 \begin{equation}\label{xir}
\Gamma=\bigcup_{r\in\bar\R}\xi_r\subset D,\quad\text{ where }\quad
\xi_r=
\langle(\p_x+p\,\p_z)+r\sqrt{3}\,\p_q+r^2\sqrt{3}\,(\p_y+q\,\p_z)+r^3\p_p\rangle.
 \end{equation}

The tangent to the RNC is the hypersurface in $D$ of degree 4 given by
 \[
T\Gamma=\{a(\p_x+p\,\p_z)+b\,\p_q+c(\p_y+q\,\p_z)+d\p_p\,:\,4\,(ac^3+b^3d)=\sqrt{3}\,(b^2c^2-3a^2d^2+6abcd)\}.
 \]

The curves tangent to those varieties are given by the following equations respectively:
 \begin{gather}
\E_D=\{z_1=p+q\,y_1\},\notag\\
\E_{T\Gamma}=\bigr\{4\,(y_1^3+p_1q_1^3)=\sqrt{3}\,(y_1^2q_1^2-3p_1^2+6y_1p_1q_1),\ z_1=p+q\,y_1\bigl\},\label{EEE}\\
\E_{\Gamma}=\left\{y_1=\frac{q_1^2}{\sqrt{3}},\ p_1=\frac{q_1^3}{3\sqrt{3}},\ z_1=p+\frac{qq_1^2}{\sqrt{3}}\right\}\notag.
 \end{gather}
Note that in our coordinates $(x,y,p,q,z)$ the invariant conformally symplectic structure has the canonical form
$\omega=d\alpha=dx\wedge dp+dy\wedge dq$, while
the RNC has coefficients involving $\sqrt{3}$
(if we normalize RNC standartly,
then the symplectic structure has a coefficient 3).

\subsection{Action and orbits of $P_2$ on 1-jets}

The action of $\fp_+$ on $\m=\g/\fp$ for $\fp=\fp_2$ is nontrivial only on $\g_{-2}$.
Moreover $\g_2$ acts trivially and parametrizing $\g_1$ by the coefficients
$s_1,s_2,s_3,s_4$ in the basis $f_1,f_2,f_3,f_4$ of $\g_1$
dual to the basis $e_1,e_2,e_3,e_4$ of $\g_{-1}$ as described on the picture
 \begin{center}
\begin{tikzpicture}
\draw[gray, yshift=17.2mm] (180:2) to (180:-2);
\draw[gray, yshift=8.6mm] (180:2) to (180:-2);
\draw[gray] (180:2) to (180:-2);
\draw[gray, yshift=-8.6mm] (180:2) to (180:-2);
\draw[gray, yshift=-17.2mm] (180:2) to (180:-2);
\draw[<->] (0:1) to (0:-1);
\draw[<->] (120:1)node[above]{$f_3$} to (120:-1)node[below]{$e_3$};
\draw[<->] (60:1)node[above]{$f_2$} to (60:-1)node[below]{$e_2$};
\draw[<->] (30:{sqrt(3)})node[above]{$f_1$} to (30:-{sqrt(3)}) node[below]{$e_1$};
\draw[<->] (90:{sqrt(3)})node[above]{$f_5$} to (90:-{sqrt(3)}) node[below]{$e_5$};
\draw[<->] (150:{sqrt(3)})node[above]{$f_4$} to (150:-{sqrt(3)})node[below]{$e_4$};
\end{tikzpicture}
 \end{center}
we encode the action as follows:
 \[
(v_1,v_2,v_3,v_4,v_5)\mapsto(v_1+s_1v_5,v_2+s_2v_5,v_3+s_3v_5,v_4+s_4v_5,v_5),
 \]
where $v_i$ are coordinates on $\m=\g_{-1}\oplus\g_{-2}$ associated to the basis $e_1,\dots,e_5$.

The action of $G_0=GL_2$ on $\g_{-1}$ is given by the matrix
 \[
 \left(\begin {array}{cccc}
a^3 & \sqrt{3}\,a^{2}c & \sqrt{3}\,ac^2 & c^3\\ \noalign{\medskip}
\sqrt{3}\,a^2b & a^2d+2\,abc & 2\,acd+bc^2 & \sqrt{3}\,c^2d\\ \noalign{\medskip}
\sqrt{3}\,ab^2 & 2\,abd+b^2c & ad^2+2\,bcd & \sqrt{3}\,cd^2\\ \noalign{\medskip}
b^3 & \sqrt{3}\,b^2d & \sqrt{3}\,bd^2 & d^3\end {array}\right)
 \]
in coordinates $(v_1,v_2,v_3,v_4)$ and it extends to $\g_{-2}$ by $v_5\mapsto(ad-bc)^3v_5$.

Hence the action of $P_2$ decomposes $\m$ into the following orbits
 \begin{itemize}
\item One orbit in $TM\backslash D$,
\item Two orbits in $D\backslash T\Gamma$,
\item One orbit in $T\Gamma\backslash \Gamma$,
\item One orbit in $\Gamma\backslash\{0\}$.
 \end{itemize}

The curves of fixed type $\texttt{t}$ of their 1-jet according to the orbit type
as above, are given by the equations $\E_{\texttt{t}}$ described in \eqref{EEE}.

\subsection{Number of invariants}

Similar to what is done in Section \ref{S1},
we compute the Hilbert function $h_k$
counting the number of differential invariants of pure order $k$,
depending on the type $\texttt{t}$ of the orbit of 1-jet,
and tabulate it as follows.

 \[
\begin{array}{|l|cccccccccccl|}
\hline
\ \texttt{t}\ \diagdown\ k
& 0 & 1 & 2 & 3 & 4 & 5 & 6 & 7 & 8 & 9 & 10 & \dots\\\hline
TM\backslash D & 0 & 0 & 0 & 3 & 4 & 4 & 4 & 4 & 4 & 4 & 4 & 4\\
D\backslash T\Gamma & 0 & 0 & 0 & 1 & 2 & 3 & 3 & 3 & 3 & 3 & 3 & 3\\
T\Gamma \backslash \Gamma & 0 & 0 & 0 & 0 & 0 & 1 & 2 & 2 & 2 & 2 & 2 & 2\\
\Gamma \backslash \left\{ 0 \right\}   & 0 & 0 & 0 & 0 & 0 & 0 & 0 & 0 & 0 & 0 & 1 & 1 \\\hline
\end{array}
 \]

\medskip

Again we consider in more details only differential invariants
of curves of constant type with either minimal or maximal type $\texttt{t}$
of the orbit in 1-jets.

\subsection{Invariants of minimal integral curves}

There are several types $\texttt{t}$ of integral curves for $G_2/P_2$,
we consider those that are tangent to RNC $\Gamma$. The algebra
of differential invariants $\mathcal{I}_\imath$ of these curves is generated by
 \[
\widetilde{I}_{10}=\frac{\widetilde{R}_{10}}{\widetilde{R}_8^{7/3}}\quad
\text{ and }\quad
\widetilde\Box=\frac{q_2}{\widetilde R_8^{1/6}}\mathcal{D}_x\,,
 \]
where $\D_x$ is the operator of total derivative restricted to $\E_\Gamma$,
 \[
\begin{array}{ll}
\widetilde{R}_8 \,\,= \!\! & 196\,q_8q_2^5-3136\,q_2^4q_3q_7-5936\,q_2^4q_4q_6
-3605\,q_2^4q_5^2+26208\,q_2^3q_3^2q_6+83538\,q_2^3q_3q_4q_5\\[2mm]
& +18252\,q_2^3q_4^3-144144\,q_2^2q_3^3q_5-281853\,q_2^2q_3^2q_4^2
+555984\,q_3^4q_4q_2-247104\,q_3^6
\end{array}
 \]
corresponds to the invariant $R_8$ from the case $G_2/P_1$, and similar
for $\widetilde{R}_{10}$ (see Theorem \ref{thm1} and Remark \ref{rk1}).
Explicitly this correspondence will be explained in the next section.

\subsection{Invariants of generic curves}\label{gencurveG2P2}

For generic curves $\gamma\subset K$ transversal to the distribution $D$,
the first relative invariant appears in order 1 and it corresponds to
tangency with $D$:
 \[
\widetilde{R}_1 = z_1-p-qy_1.
 \]
The second order relative invariant $\widetilde{R}_2$ has more complicated
formula, but the most difficult are absolute differential invariants
$\widetilde{I}_{3a}$, $\widetilde{I}_{3b}$, $\widetilde{I}_{3c}$,
$\widetilde{I}_{4d}$ of orders $3\,(\times3)$ and 4, which together with
the invariant derivation $\widetilde{\Box}_g$ generate the algebra
of differential invariants $\mathcal{I}_g$.
Below we explain 
how to obtain an invariant frame that, in principle, determines all
basic invariants.

It is known \cite{CS} that for every contact parabolic geometry,
in particular for $K=G_2/P_2$, there is a unique
(up to projective reparametrization) canonical (distinguished) curve
through any point $a\in K$ in any direction $v\in T_aK\setminus D_a$.
For $\gamma(0)=a$ choose $X=\dot\gamma(0)$ and denote this curve
by $\delta_X$. We treat both curves $\gamma$ and $\delta_X$ as
unparametrized. They have the same 1-jet, and therefore
their difference canonically determines 2-jet
$\zeta_X\in S^2T^*_a\gamma\otimes\nu_a$, where $\nu_a=T_aK/\langle X\rangle$
is the normal to $\gamma$ at $a$. The image
$\zeta_X(X,X)\in T_aK\,\op{mod}X$ defines uniquely a 2-plane in $T_aK$,
containing $X$, the intersection of which with $D_a$ is a line
denoted $\Upsilon_X$.

Next we use the projective geometry of $\mathbb{P}D$ equipped with RNC
$[\Gamma]$. The above construction gives a point
$[\Upsilon_X]\in\mathbb{P}D$. We assume the genericity condition
$[\Upsilon_X]\not\in T[\Gamma]$. In this case there exists
a unique bisection $L$ of $[\Gamma]$ containing $[\Upsilon_X]$,
intersecting the RNC in two points $\lambda_X^\pm$
(there is no canonical way to distinguish between $\pm$, so these points
enter non-numerated; in the case $[\Upsilon_X]\in T[\Gamma]\setminus[\Gamma]$ they coincide and in the case $[\Upsilon_X]\in[\Gamma]$ there are
infinitely many lines $L$).

Moreover we can introduce two more points
$\mu_X^\pm=T_{\lambda_X^\pm}[\Gamma]\cap T^2_{\lambda_X^\mp}[\Gamma]$
on the (unique) intersection of the first and second tangents at
the points $\lambda_X^+$ and $\lambda_X^-$ (or interchange).
The corresponding lines in $D$ can be normalized so that to form
a conformally symplectic basis with respect to $\omega$.
Only one overall scale is missing
to obtain the frame from those vectors jointly with $X$
and to fix the contact form. This can be normalized via a
differential invariant.

The formulae are rather complicated,
so instead we show in the next section how to relate
the equivalence problem for generic curves in $K=G_2/P_2$
to those in $M=G_2/P_1$.

\section{Twistor correspondence}

The three realizations of $G_2$, acting on various $G_2/P$ as discussed above,
are conveniently related by the following double fibration
(parabolic subgroups $P$ correspond to crosses on the Dynkin-Satake diagrams).

 \begin{figure}[h]
 \begin{center}
 \begin{tikzpicture}
 \draw (0,0) -- (1,0);
 \draw (0,0.07) -- (1,0.07);
 \draw (0,-0.07) -- (1,-0.07);
 \draw (0.6,0.15) -- (0.4,0) -- (0.6,-0.15);
 \node[draw,circle,inner sep=2pt,fill=white] at (0,0) {};
 \node[draw,circle,inner sep=2pt,fill=white] at (1,0) {};
 \node[below] at (0,-0.1) {$\times$};
 \draw (3,0) -- (4,0);
 \draw (3,0.07) -- (4,0.07);
 \draw (3,-0.07) -- (4,-0.07);
 \draw (3.6,0.15) -- (3.4,0) -- (3.6,-0.15);
 \node[draw,circle,inner sep=2pt,fill=white] at (3,0) {};
 \node[draw,circle,inner sep=2pt,fill=white] at (4,0) {};
 \node[below] at (4,-0.1) {$\times$};
 \draw (1.5,1) -- (2.5,1);
 \draw (1.5,1.07) -- (2.5,1.07);
 \draw (1.5,0.93) -- (2.5,0.93);
 \draw (2.1,1.15) -- (1.9,1) -- (2.1,0.85);
 \node[draw,circle,inner sep=2pt,fill=white] at (1.5,1) {};
 \node[draw,circle,inner sep=2pt,fill=white] at (2.5,1) {};
 \node[below] at (1.5,0.9) {$\times$};
 \node[below] at (2.5,0.9) {$\times$};
\path[->,>=angle 90](1.9,0.75) edge (0.65,0.25);
\path[->,>=angle 90](2.1,0.75) edge (3.35,0.25);
 \end{tikzpicture}
\end{center}
\end{figure}

The arrows are projections corresponding to the inclusions $P_1\hookleftarrow P_{12}\hookrightarrow P_2$.
Below we explain how this correspondence relates three equivalence problems studied in this paper.

\subsection{Correspondence for points}

For $G_2/P_{12}$ we used the nomenclature $\M$ because it was the geometric
prolongation of $M=G_2/P_1$. In coordinate language
the affine chart $\R^5(x,y,p,q,z)$ of $M$ is covered by the affine chart
$\R^6(x,y,p,q,z,r)$, where $r$ is such that the point of $\M$
is represented by the line
$\ell=\langle\p_x+p\p_y+q\p_p+q^2\p_z+r\p_q\rangle$ in the
distribution \eqref{Pi}.
The rank 2 distribution $\Delta$ of $G_2/P_{12}$ with the canonical split
into $1+1$ line subbundles is given by \eqref{PPi}.

We can also represent $G_2/P_{12}$ as the geometric prolongation of $K=G_2/P_2$, so that its points $\hat{b}=(b,\xi_r)$ are lines in RNC
$\xi_r\subset D_b$, $b\in K$, see \eqref{xir}. In coordinates
this gives $G_2/P_{12}=\hat{K}$ with affine chart $\R^6(x,y,p,q,z,r)$,
and this is naturally equipped with the rank 2 distribution
of growth $(2,3,4,5,6)$ that is canonically split into $1+1$ lines
subbundles as follows:
 \begin{equation}\label{PCi}
\tilde\Delta=\langle (\p_x+p\,\p_z)+r\sqrt{3}\,\p_q+r^2\sqrt{3}\,(\p_y+q\,\p_z)+r^3\p_p,\p_r\rangle.
 \end{equation}

There is a diffeomorphism $\varphi:(\M,\Delta)\to(\hat{K},\tilde{\Delta})$
that interchanges the first and the second generators of the distributions.
In other words, the vertical line (fiber to $\pi_l$) in $\Delta$
is mapped to the horizontal line in $\tilde\Delta$ and the horizontal line
in $\Delta$ is mapped to the vertical line (fiber to $\pi_r$) in $\tilde\Delta$. This fits the following diagram

 \begin{center}
\begin{tikzpicture}
\node () at (3,2.5) {}; \node () at (0,-0.3) {};
\node (A) at (2,2) {$G_2/P_{12}\,:\,(\M,\Delta)$};
\node (B) at (5,2) {$(\hat{K},\tilde\Delta)$};
\node (C) at (0,0) {$G_2/P_1\,:\,(M,\Delta)$};
\node (D) at (6.5,0) {$G_2/P_2\,:\,(K,D,\Gamma)$};
 \path[->,font=\scriptsize,>=angle 90]
(2.8,1.7) edge node[above] {$\pi_l$\hphantom{1em}} (0.8,0.3)
(3.6,2) edge node[above]{${}^{\displaystyle\varphi}$} (4.3,2)
(5.1,1.7) edge node[above] {$\hphantom{1em}\pi_r$} (7.1,0.3);
\end{tikzpicture}
 \end{center}
where $\pi_l(x,y,p,q,z,r)=(x,y,p,q,z)$ and
$\pi_r(x,y,p,q,z,r)=(x,y,p,q,z)$ in the corresponding coordinates.
The required transformation is given by formula
 \begin{multline*}
\varphi(x,y,p,q,z,r)=\\
\left( -\frac{1}{r},\, \sqrt{3}\Bigl(2p-\frac{q^2}{r}\,\Bigr),\,
3z-\frac{q^3}{r},\, \sqrt{3}\Bigl(x-\frac{q}{r}\Bigr),\,
6(xp-y)-\frac{3}{r}\Bigl(z+xq^2\Bigr)+\frac{2q^3}{r^2},\, q \right).
 \end{multline*}

\subsection{Correspondence for integral curves}

Integral curves for $(M,\Pi)$ are given by equation \eqref{IC1}
and their prolongations to $(\M,\Delta)$ are determined by the
additional constraint $r=q_1$. 
Thus there is a 1:1 correspondence between integral curves of
$(M,\Pi)$ and integral curves of $(\M,\Delta)$. The invariant constraint
$r_1=0$ (or $r=\op{const}$) determines a 1-parametric family of
integral curves through any point called abnormal extremals for $\Pi$.
They are projections of the integral curves for the horizontal
line distribution given by the first generator of \eqref{PPi}.

Minimal integral curves for $(K,D,\Gamma)$ are given by equation
$\E_\Gamma$ of \eqref{EEE} and their prolongation to $(\hat{K},\tilde\Delta)$ are determined by the additional constraint $r=q_1/\sqrt{3}$.
Thus there is a 1:1 correspondence between integral curves of
$(K,D,\Gamma)$ and integral curves of $(\hat{K},\tilde\Delta)$.
The invariant constraint $r_1=0$ (or $r=\op{const}$) determines straight line generators of the RNC through any point of $K$.
They are projections of the integral curves for the horizontal
line distribution given by the first generator of \eqref{PCi}.

This correspondence on the level of jets is summarized in the following
diagram, where we denote by $\jmath_{l,r}$ the lifts defined above, they
are right inverse to the projections $\pi_{l,r}$.

 \begin{center}
\begin{tikzpicture}
\node () at (3,2) {}; \node () at (0,-0.1) {};
\node (A) at (2.7,1.5) {$\E_\Delta$};
\node (C) at (0.1,0) {$\E_\Pi$};
\node (D) at (5.2,0) {$\E_\Gamma$};
 \path[->,font=\scriptsize,>=angle 90]
(2.2,1.2) edge node[above] {$\pi_l$\hphantom{1em}} (0.2,0.3)
(3.2,1.2) edge node[above] {$\hphantom{1em}\pi_r$} (5.2,0.3)
(0.5,0.3) edge node[below] {\hphantom{1em}$\jmath_l$} (2.5,1.2)
(4.9,0.3) edge node[below] {$\jmath_r\hphantom{1em}$} (2.9,1.2);
\end{tikzpicture}
 \end{center}

Note that $\E_\Pi\times\mathbb{P}^1\simeq\E_\Delta\simeq
\E_\Gamma\times\mathbb{P}^1$ (since any integral curve is uniquely
lifted given a point in the fiber) and $\E_\Pi\simeq\E_\Gamma$, which
explains isomorphism of the algebras of differential invariants.

\subsection{Correspondence for generic curves}

The above correspondence cannot be extended to all curves,
however we can produce lifts for generic curves.

On the left side of the double (twistor) fibration the lift
is determined from the observation of Section \ref{Sec_gen1} that
$X$ given by \eqref{XX} determines $Y\in\Pi$ up to scale
by the condition $g(X,Y)=0$. Setting $Y=\p_x+p\p_y+q\p_p+q^2\p_z+r\p_q$
this and \eqref{conf} gives the formula for $r$, from which we conclude
that the lift is given by the following formula (and its prolongations):
 \[
\jmath_l(x,y,p,q,z)=\left(x,y,p,q,z,\frac{2qp_1-z_1-q^2}{2(y_1-p)}\right).
 \]

On the right side of the double fibration the computation is a bit more
involved. First we derive the formula for distinguished curves in
direction $\g_{-2}$. By the mentioned general result \cite[\S5.3.7]{CS},
there is a unique unparametrized distingushed curve of that type
in any non-contact direction on $TK$. This gives an invariant section
$J^1(K,1)\dashrightarrow J^2(K,1)$ defined on a Zariski open set.
(An alternative way to check it: the stabilizer of a generic
$b_1\in J^1$ is $GL_2\times\R_\times$ that acts on $\pi_{2,1}^{-1}(b_1)\simeq\R^4$
via an irreducible representation of $GL_2$ that has one fixed point.)

The explicit formula involves matrix realization $G_2\subset SO(3,4)$
described in \cite{HS} on the level of Lie algebras;
the reference specifies the $\fp_1$ grading but one can
also identify $\fp_2$. The corresponding parabolic subgroup $P_2$
can be coordinized via $GL_2\ltimes\exp(\fp_+)$ and the action of this
on $\m=\g_-$ is then explicitly derived. The formulae
for the distinguished curves (omitted here, see Maple's supplement)
imply the formula for the above section, or equivalently
for a point $[\Upsilon_X]\in\mathbb{P}D$ as defined in Section
\eqref{gencurveG2P2}:
 \begin{multline*}
\Upsilon_X=
\frac{2\sqrt{3}\,q_1^3-9\,qy_2-18\,y_1q_1+9\,z_2}{9(qy_1+p-z_1)}\,
(\partial_x + p\partial_z)+
\left(q_2-\frac{2\,\sqrt{3}\,y_1^2+3\,qq_1y_2+2\,y_1q_1^2-3\,q_1z_2}
{3(qy_1+p-z_1)}\right)\,\partial_q\\
+\left(y_2+\frac{2\,\sqrt{3}\,p_1q_1^2-3\,qy_1y_2-4\,y_1^2q_1-6\,y_1p_1+3\,y_1z_2}
{3(qy_1+p-z_1)}\right)\,(\partial_y+q\partial_z)\\
+\left(p_2-\frac{2\,\sqrt{3}\,y_1^3+9\,qp_1y_2+18\,p_1^2-9\,p_1z_2}
{9(qy_1+p-z_1)}\right)\,\partial_p.
 \end{multline*}

Next, given a point $[a:b:c:d]\in\mathbb{P}D$ the RNC secant line
through it intersects $[\Gamma]$ at the points
corresponding to the parameter $r$ from \eqref{xir} so:
 \[
\lambda_X^\pm =
\frac{3\,ad-bc \pm \sqrt{(3\,ad-bc)^2-4(\sqrt{3}ac-b^2)(\sqrt{3}bd-c^2)}}
{2(\sqrt{3}ac-b^2)}\,.
 \]
Equivalently, the points $r=\lambda_X^\pm$ are the solutions of the
quadratic equation
 \[
(\sqrt{3}\,ac -b^2)r^2+(bc-3\,ad)\,r+\sqrt{3}\,bd-c^2=0.
 \]
This formula for $r$ composed with the formula for $\Upsilon_X$
(the coefficients $a,b,c,d$ are extracted in the order of appearance)
defines two lifts of generic curves from $K$ to $\hat{K}\simeq\M$:
 \[
\jmath^\pm_r(x,y,p,q,z)=(x,y,p,q,z,\lambda_X^\pm\circ[\Upsilon_X]).
 \]

This correspondence on the level of jets is summarized in the following
diagram:
 \begin{center}
\begin{tikzpicture}
\node () at (3,2) {}; \node () at (0,-0.1) {};
\node (A) at (2.5,1.5) {$J^\infty(\M,1)$};
\node (C) at (0,0) {$J^\infty(M,1)$};
\node (D) at (5,0) {$J^\infty(K,1)$};
 \path[->,font=\scriptsize,>=angle 90]
(2.2,1.2) edge node[above] {$\pi_l$\hphantom{1em}} (0.2,0.3)
(3.2,1.2) edge node[above] {$\hphantom{1em}\pi_r$} (5.2,0.3);
 \path[dashed,->,font=\scriptsize,>=angle 90]
(0.5,0.3) edge node[below] {\hphantom{1em}$\jmath_l$} (2.5,1.2)
(4.9,0.3) edge node[below] {$\jmath_r\!\!{}^\pm\hphantom{1em}$} (2.9,1.2);
\end{tikzpicture}
 \end{center}
The dashed arrows are defined on open dense subsets of their domains,
are right inverse to the corresponding projections, and in addition,
$\jmath_r\!\!{}^\pm$ is 1:2 map. This can be seen as an analog of
the B\"acklund transformation, so that for one (jet of) curve in
$K$ we obtain two such in $M$. This allows to derive the algebra of
differential invariants $\mathcal{I}_g$ of curves in $G_2/P_2$
from the results of Section \ref{S1} by averaging the invariants
thereof on the two branches $\pi_l\circ\jmath_r^\pm$.

\section{Concluding remarks}

The computations in this paper demonstrate the method of
differential invariants for $G_2$ action on curves in generalized
flag varieties.
The group is more complicated than the projective group $PSL_{n+1}$
mentioned in the introduction, and we address the corresponding challenges.

For (minimal) integral curves the approach is very effective and provides
a complete description of the algebra. This has to be compared with
the method of moving frame \cite{C}; a moving frame for this problem
was constructed in \cite{DZ} but the algebra of invariants was not derived.

A modification of this method, the equivariant moving frame \cite{O},
is not applicable as it relies on an explicit Lie group
parametrization, which is non-trivial for $G_2$ (one has to resolve
the quadratic and cubic equations defining the group). We worked mainly
with the Lie algebra. Even in this case for generic curves the direct
computations fail, and we had to evoke geometric arguments to arrive
to the basic invariants, in particular exploiting the ideas of moving frames.

The results of this paper concern only curves in homogeneous flag varieties
$G_2/P$, but they can be extended to more general case of curves
in curved $M^5$ of type $(G_2,P_1)$ etc. Indeed in this more general case
the symmetry algebra of such $M^5$ is smaller than $G_2$ yet the invariants
can be found by the same method. In particular, the stratification
of 1-jets makes a perfect sense in the curved case and one can derive
relative invariants similar to $R_1,R_2$ in Section \ref{S1}
(for $R_1$ this is straightforward) leading to
absolute differential invariants.

The invariance is meant here in the following sense: If $\phi:M_1\to M_2$
is an equivalence between two different spaces with their (2,3,5)
distributions, sending one curve $\gamma_1\subset M_1$ to another
$\gamma_2\subset M_2$ then the invariants are superposed.
Since the structural group $P_1$ for the Cartan bundle associated to
this normal parabolic geometry \cite{CS} was central in our computations,
the basic invariants are expected to generalize.

For the geometry of type $(G_2,P_{12})$ the situation is completely
similar because it is functorially equivalent to the geometry of
type $(G_2,P_1)$. However in the curves case $(G_2,P_2)$ type geometry
fails the twistor correspondence, so this would require a separate
consideration.

\end{document}